\theoremstyle{theorem}
\newtheorem{theorem}{Theorem}
\newtheorem{proposition}[theorem]{Proposition}
\newtheorem{lemma}[theorem]{Lemma}
\newtheorem*{question}{Question}
\theoremstyle{definition}
\theoremstyle{remark}
\newtheorem{remark}[theorem]{Remark}
\begin{document}

\title[]{Random Walks, Equidistribution \\and Graphical Designs}

\author[]{Stefan Steinerberger}
\address{Department of Mathematics, University of Washington, Seattle, WA 98195, USA} \email{steinerb@uw.edu}

\author[]{Rekha R. Thomas}
\address{Department of Mathematics, University of Washington, Seattle, WA 98195, USA} 
\email{rrthomas@uw.edu}

\subjclass[2020]{05C48, 35R02} 
\keywords{Graphical Design, Random Walk, Graph Sampling, Graph Clustering}
\thanks{S.S. is supported by the NSF (DMS-2123224) and the Alfred P. Sloan Foundation. R.T. is supported by the Walker Family Endowed Professorship at the University of Washington.}

\begin{abstract} Let $G=(V,E)$ be a $d$-regular graph on $n$ vertices and let $\mu_0$ be a probability measure on $V$. The act
of moving to a randomly chosen neighbor leads to a sequence of probability measures supported on 
$V$ given by $\mu_{k+1} = A D^{-1} \mu_k$
, where $A$ is the adjacency matrix and $D$ is the diagonal matrix of vertex 
degrees of $G$. 
Ordering the eigenvalues of $ A D^{-1}$ as $1 = \lambda_1 \geq |\lambda_2| \geq \dots \geq |\lambda_n| \geq 0$, it is well-known that the graphs for which $|\lambda_2|$ is small are those in which 
the random walk process converges quickly to the uniform distribution: for all initial probability measures $\mu_0$ and all $k \geq 0$,
$$ \sum_{v \in V} \left| \mu_k(v) - \frac{1}{n} \right|^2 \leq  \lambda_2^{2k}.$$
One could wonder whether this rate can be improved for specific initial probability measures $\mu_0$.
We show that if $G$ is regular, then for any $1 \leq \ell \leq n$, there exists a probability measure $\mu_0$ supported on at most $\ell$ vertices so that
$$ \sum_{v \in V} \left| \mu_k(v) - \frac{1}{n} \right|^2 \leq  \lambda_{\ell+1}^{2k}.$$
The result has applications in the graph sampling problem: we show that these measures have good sampling properties for reconstructing global averages.
\end{abstract}

\maketitle

\section{Introduction and Statement}
\subsection{Introduction.}  The behavior of random walks on a graph, and the speed with which it equidistributes, is a  well-studied problem in mathematics and computer science. Let $G=(V,E)$ be a finite undirected
graph with vertex set $V=\{v_1, \ldots, v_n\}$ and edge set $E$. We will assume throughout that $G$ is $d$-regular, meaning that exactly $d$ edges are incident at every vertex of $G$. Let $A \in \{0,1\}^{n \times n}$ denote the {\em adjacency matrix} of $G$ defined as $a_{ij} = 1$ if $\{i,j\} \in E$ and $0$ otherwise, and let $D$ 
be the diagonal matrix with $d_{ii} = d$ for all $i=1,\ldots,n$.
Starting with an initial probability measure $\mu_0$ supported on the vertices in $V$, the act of transporting probability mass to a randomly chosen neighbor leads to a sequence of probability  
measures $\{\mu_k\}$ supported on $V$ given by 
\begin{align}
 \mu_{k+1} = AD^{-1} \mu_k.
\end{align}

Since $AD^{-1} = (1/d) \cdot A$ is symmetric and 
doubly stochastic, its largest eigenvalue is $1$ and all other eigenvalues lie in $[-1,1]$. We may assume that the eigenvalues are labeled according to absolute value as 
\begin{align} \label{eq:eigenvalues}
    1 = \lambda_1 \geq |\lambda_2| \geq \dots \geq |\lambda_n| \geq 0.
\end{align}
$AD^{-1}$ has an orthonormal eigenbasis $\{\phi_1, \ldots, \phi_n\} \subset \mathbb{R}^n$,
with $\phi_i$ an eigenvector with eigenvalue $\lambda_i$. Note that $\phi_1 = \frac{\mathbbm{1}}{\sqrt{n}}$ where 
$\mathbbm{1}$ is the vector of all ones in $\mathbb{R}^n$. 
This information already allows for a quick and simple spectral analysis of random walks on $G$. 
Let $\frac{\mathbbm{1}}{{n}}$ denote the {\em uniform distribution} on $V$. Then applying the spectral theorem, one deduces that
\begin{align} \label{eq:convergence to uniform}
 \sum_{v \in V} \left| \mu_k(v) - \frac{1}{n} \right|^2 \leq  \lambda_2^{2k}.
 \end{align}
In other words, $\mu_k$ converges to the uniform distribution at a rate controlled by $|\lambda_2|$.
The argument also shows that this is, for `typical' measures $\mu_0$, the correct asymptotic rate as $k \rightarrow \infty$: the asymptotic rate is sharp up to (multiplicative) 
constants as soon as
$\left\langle \mu_0, \phi_2 \right\rangle \neq 0$.
This is one of the many ways in which $|\lambda_2|$ is a fundamental characteristic of the graph. It is also clear from the underlying picture that having a fast equidistribution rate is easier when there are many edges since that simplifies exploring the graph via a random walk. This naturally leads to the study of {\em expander graphs}, graphs with relatively few edges with excellent expansion properties. We refer to \cite{hoory-linial-wigderson} for a survey on expander graphs.

\subsection{Exceptional measures.}  One could naturally wonder about the existence of initial probability measures $\mu_0$ for which the rate of convergence of $\{\mu_k\}$ to the uniform distribution is faster than that in~\eqref{eq:convergence to uniform}. It will be evident from the arguments in the proof of our main theorem that such measures, if they do exist at all, have to be exceedingly rare.
If $\mu_0$ is the uniform measure $\frac{\mathbbm{1}}{n}$, then convergence is instantaneous since 
the uniform measure is invariant under the random walk: 
$$ AD^{-1} \frac{\mathbbm{1}}{n}= \frac{1}{d}A \frac{\mathbbm{1}}{n} = \frac{\mathbbm{1}}{n}.$$ 
However, $\frac{\mathbbm{1}}{n}$ is supported on all of $V$, which prompts the following.

\begin{question}
If the initial measure $\mu_0$ is restricted to be supported on at most $\ell$ vertices, what kind of convergence rate is possible?
\end{question}

If $\mu_0$ is supported on a single vertex, then one cannot in general hope for any rate faster than ${\lambda_2}^{2k}$. On the other hand, as we just saw, if $\mu_0$ can have support on all of $V$, then we can start with the uniform measure and convergence is instantaneous. This paper was motivated 
by trying to understand the intermediate regime, and our main result is the following.

\begin{theorem} \label{thm:main theorem}
Let $G=(V,E)$ be a connected, regular graph on $n$ vertices and let $1 \leq \ell \leq n-1$. There  exists a probability measure $\mu_0$ supported on at most $\ell$ vertices in $V$ 
such that the sequence of measures $\mu_{k+1} = AD^{-1} \mu_k$ satisfies
$$  \sum_{v \in V} \left| \mu_k(v) - \frac{1}{n} \right|^2 \leq  \lambda_{\ell+1}^{2k}.$$
\end{theorem}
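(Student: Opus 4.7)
The plan is to first understand, in spectral terms, what condition on $\mu_0$ yields the desired decay rate, and then to use a dimension-count / convex-geometry argument to produce such a $\mu_0$ with small support.

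First, expand the initial measure in the orthonormal eigenbasis as $\mu_0 = \sum_{i=1}^n c_i \phi_i$, where $c_i = \langle \mu_0, \phi_i \rangle$. Since $\phi_1 = \mathbbm{1}/\sqrt{n}$ and $\mu_0$ is a probability measure, $c_1 = 1/\sqrt{n}$ and $c_1 \phi_1 = \mathbbm{1}/n$. Iterating the walk gives $\mu_k = \mathbbm{1}/n + \sum_{i \geq 2} c_i \lambda_i^k \phi_i$, so by Parseval
$$ \sum_{v \in V} \left| \mu_k(v) - \tfrac{1}{n} \right|^2 \;=\; \sum_{i \geq 2} c_i^2 \lambda_i^{2k}. $$
Hence, if we can arrange that $c_2 = c_3 = \cdots = c_\ell = 0$, the right-hand side collapses to $\sum_{i \geq \ell+1} c_i^2 \lambda_i^{2k} \leq \lambda_{\ell+1}^{2k} \|\mu_0\|_2^2$. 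Non-negativity and $\|\mu_0\|_1 = 1$ immediately yield $\|\mu_0\|_2^2 \leq \|\mu_0\|_1^2 = 1$, so the stated bound would follow.

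It therefore suffices to exhibit a probability measure $\mu_0$ supported on at most $\ell$ vertices that is orthogonal to $\phi_2, \ldots, \phi_\ell$. Consider the linear map $T \colon \mathbb{R}^n \to \mathbb{R}^{\ell-1}$ defined by $T(\mu) = (\langle \mu, \phi_2 \rangle, \ldots, \langle \mu, \phi_\ell \rangle)$. The image of the standard probability simplex $\Delta_{n-1} \subset \mathbb{R}^n$ under $T$ is a convex polytope, and it contains the origin because $T(\mathbbm{1}/n) = 0$ (orthogonality of the eigenbasis). Since $T(\Delta_{n-1}) = \operatorname{conv}(T(e_1), \ldots, T(e_n))$ lives in $\mathbb{R}^{\ell-1}$, Carathéodory's theorem guarantees that $0$ can be written as a convex combination of at most $\ell$ of the points $T(e_i)$. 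Pulling this convex combination back yields the desired probability measure $\mu_0$ supported on at most $\ell$ vertices with $T(\mu_0) = 0$, i.e.\ $\langle \mu_0, \phi_i \rangle = 0$ for $i = 2, \ldots, \ell$.

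Combining the two steps completes the proof. The main conceptual point, and the step I expect to be most delicate to present cleanly, is the Carathéodory-type argument: the orthogonality constraints form an $(\ell-1)$-dimensional affine condition, which matches exactly the combinatorial dimension needed so that the simplex of probability measures still contains a witness supported on $\ell$ vertices. Everything else is a standard spectral calculation, and connectedness (needed only to pin down the eigenspace of $\lambda_1 = 1$) enters the picture solely to ensure that $\phi_1$ is a simple eigenvector.
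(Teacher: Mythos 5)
Your proposal is correct, and the spectral half (expanding $\mu_0$ in the eigenbasis, noting $c_1\phi_1 = \mathbbm{1}/n$, applying Parseval, and bounding $\|\mu_0\|_2^2 \leq 1$ via non-negativity) is essentially identical to the paper's. Where you genuinely diverge is in the existence argument for the small-support measure. The paper sets up the system $Mw = e_1$, $w \geq 0$ with $M$ the $\ell \times n$ matrix whose rows are $\phi_1^\top,\ldots,\phi_\ell^\top$, proves feasibility by a Farkas-lemma contradiction (a separating functional $-\phi_1 + \sum_{j\ge 2} y_j\phi_j \geq 0$ would have non-negative inner product with the positive vector $\phi_1$, forcing $-1 \geq 0$), and then invokes the \emph{conic} Carath\'eodory theorem in $\mathbb{R}^\ell$ to cut the support down to $\ell$. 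You instead map the probability simplex into $\mathbb{R}^{\ell-1}$ via $T(\mu) = (\langle\mu,\phi_2\rangle,\ldots,\langle\mu,\phi_\ell\rangle)$, observe that $T(\mathbbm{1}/n)=0$ by orthogonality so the origin lies in $\operatorname{conv}(T(e_1),\ldots,T(e_n))$, and apply the \emph{affine} Carath\'eodory theorem to write $0$ as a convex combination of at most $(\ell-1)+1 = \ell$ of the $T(e_i)$. Both routes land on the same support bound, but yours buys something: feasibility is witnessed explicitly by the uniform measure rather than established by duality, so the Farkas step disappears entirely and no rescaling of $w$ is needed since you work inside the simplex from the start. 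The paper's cone formulation is slightly more flexible (it is phrased for an arbitrary non-negative $w$, which is then normalized, and generalizes verbatim to other orthonormal systems as noted in its remarks), but your version is cleaner for the specific statement at hand. One cosmetic caveat: for $\ell = 1$ your target space is $\mathbb{R}^0$ and the argument degenerates to choosing a Dirac measure, which the paper also handles separately by remark; it is worth saying this explicitly rather than leaving it implicit.
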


While a random walk started from a single vertex $v$ converges to the 
uniform distribution at an exponential rate of at most
$\lambda_2^{2k}$ (this is the exact rate whenever $\phi_2(v) \neq 0$), our theorem says that there is always a probability measure $\mu_0$ supported on at most two vertices that converges at least at a rate controlled by $|\lambda_3|$. This becomes interesting when $|\lambda_3|$ is much smaller than 
$|\lambda_2|$, we refer to Fig.~\ref{fig:first example} and Fig. \ref{fig:two examples} for illustrations of Theorem~\ref{thm:main theorem}. 
An interesting special case is that of bipartite graphs which satisfy $\lambda_1 = |\lambda_2|=1$. If $\mu_0$ is supported on a single vertex, then it is impossible for $\mu_k$ to equidistribute: the support of $\mu_k$ oscillates between the two vertex sets in the bipartition, with the parity of $k$ predicting the half on which the measure is supported. 
In this case, a natural question is whether it is possible to achieve asymptotic uniform distribution with an  initial probability measure supported on two vertices instead. Theorem~\ref{thm:main theorem} guarantees that whenever $|\lambda_3| < 1$, there exists a probability measure $\mu_0$ supported in two vertices of a 
bipartite graph for which the arising random walk equidistributes (at a rate given by $|\lambda_3|$).

\begin{center}
    \begin{figure}[h!]
       \begin{tikzpicture}
       \node at (0,0) {\includegraphics[width=0.4\textwidth]{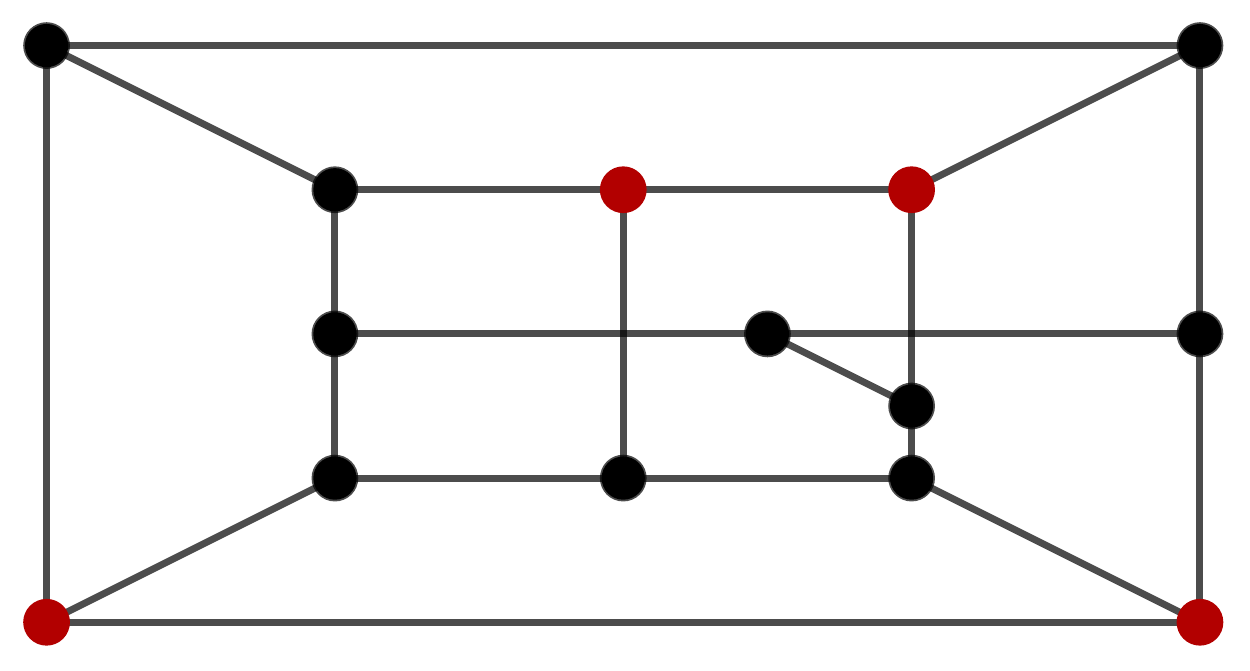}};
       \end{tikzpicture}
        \caption{Most probability measures on this graph converge to the uniform distribution at a rate given by $|\lambda_2| \sim 0.78$. By Theorem~\ref{thm:main theorem} there exists 
         $\mu_0$ supported on at most four vertices converging 
        at a faster rate given by $|\lambda_5| \sim 0.66$. Indeed, the uniform measure on the red vertices converges at a rate given by $|\lambda_9| \sim 0.47$.}
        \label{fig:first example}
    \end{figure}
\end{center}
\vspace{-10pt}

\subsection{Related results.} 
The exceptional measures in Theorem~\ref{thm:main theorem} were first introduced in a somewhat different setting under the name `Graphical Design' by Steinerberger \cite{steinerberger} as a structural analogue to {\em spherical designs}: spherical designs are subsets of points on the sphere with excellent sampling properties. It is a celebrated result of Delsarte, Goethals \& Seidel \cite{delsarte} that the cardinality of a set of points on the 
$d$-sphere $\mathbb{S}^d$ that is capable of exactly integrating all polynomials of degree at most $t$ needs to be comparable to the dimension of the vector space of polynomials of degree at most $t$. 
This has now been shown to be the correct order of magnitude by Bondarenko, Radchenko \& Viazovska \cite{bond}. Graphical designs average sufficiently smooth graph functions 
similarly to how spherical designs average polynomials of sufficiently small degree. One of the
main insights of \cite{steinerberger} is that in the graph setting, highly symmetric graphs admit 
unusually efficient graphical designs whose size is much smaller than the linear algebra prediction (similar 
to how the platonic bodies in $\mathbb{R}^3$ lead to unusually efficient spherical designs on $\mathbb{S}^2$).  
It is impossible to improve on the linear algebra setting in the continuous setting, see \cite{delsarte, steinerberger2}. Graphical designs were further explored by Babecki \cite{babecki} and Golubev \cite{golubev}.  The first general Existence Theorem was recently proven by Babecki \& Thomas \cite{thomas} who established a bijection 
between positively weighted graphical designs and the faces of a generalized {\em eigenpolytope}
of the graph. \\

There is also a philosophical relationship between our result and a result of Tchakaloff \cite{tch1, tch2, tch3} which has led to much subsequent work, see for example Bayer \& Teichmann \cite{bayer}, Putinar \cite{putinar} and Tur\'an \cite{tur}. It states that under suitable assumptions on the domain and the measure, integration of an $n-$dimensional space of functions can be achieved exactly with a positive weight supported on at most $n$ points. We will give a self-contained proof of such a result in the graph case (Lemma 1) using an argument reminiscent of Piazzon, Sommariva \& Vianello \cite{piazz}.

\vspace{-10pt}
\begin{center}
    \begin{figure}[h!]
       \begin{tikzpicture}
       \node at (0,0) {\includegraphics[width=0.25\textwidth]{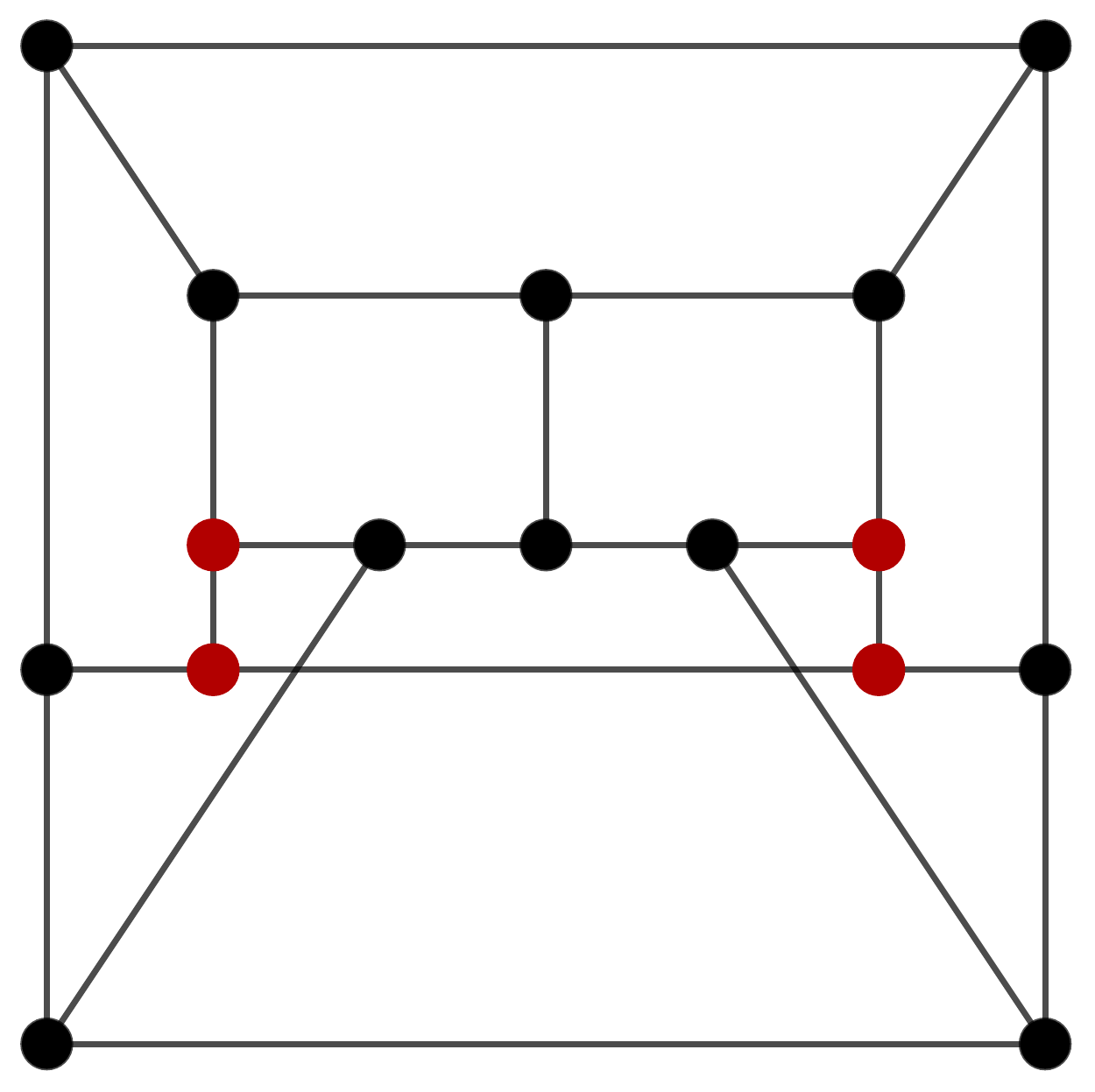}};
       \node at (5,0) {\includegraphics[width=0.25\textwidth]{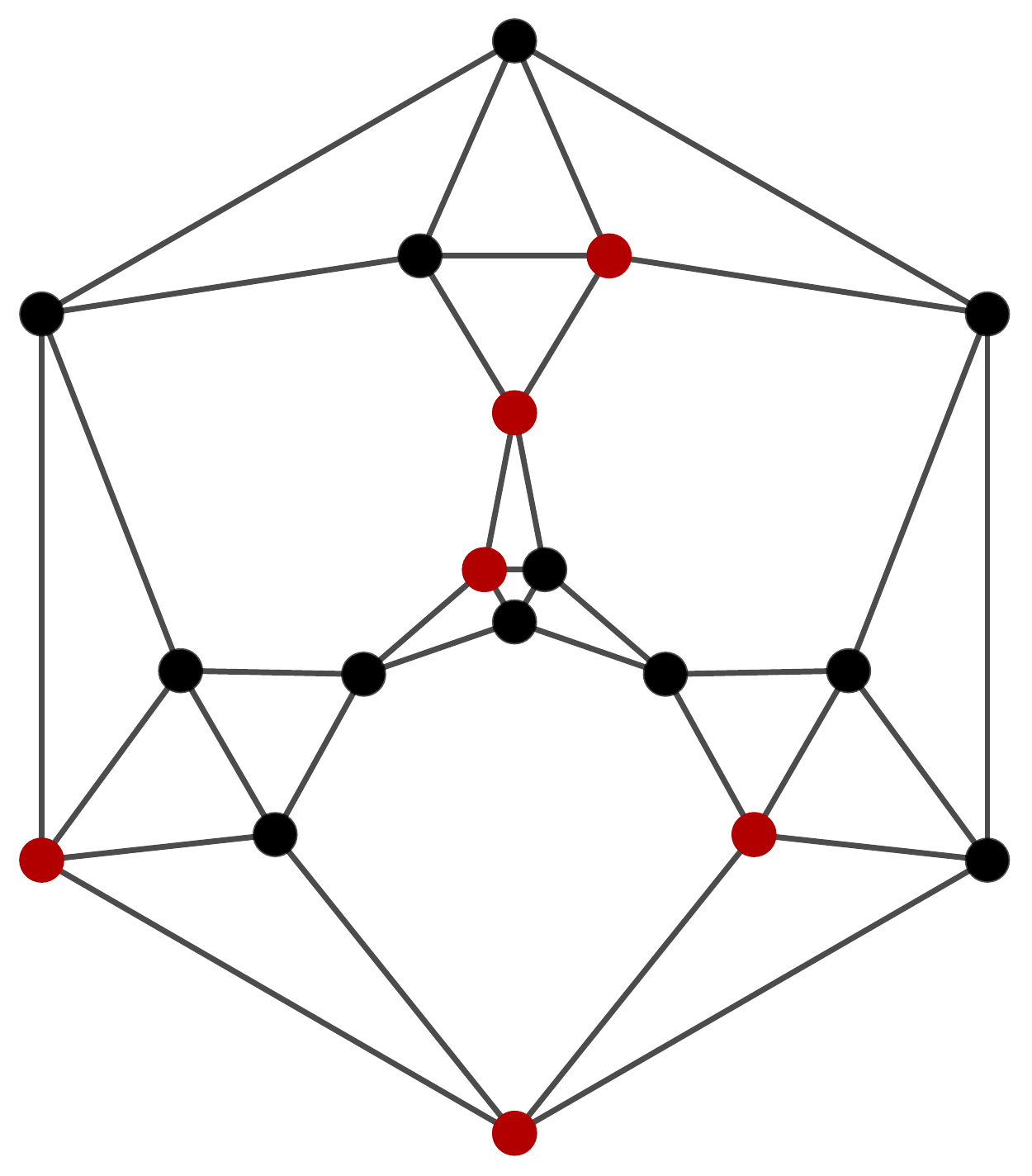}};
       \end{tikzpicture}
        \caption{Left: $|\lambda_2| \sim 0.83$. There exists a measure supported on at most 4 vertices decaying at a  rate given by $|\lambda_5| \sim 0.70$ (the uniform measure on the red vertices is an example). Right:  $|\lambda_2| = 0.75$. There exists a measure supported on at most 6 vertices decaying at a rate given by $|\lambda_7| = 0.5$. Indeed, the uniform measure on the red vertices decays at rate determined by  $|\lambda_{11}| \sim 0.25$.}
        \label{fig:two examples}
    \end{figure}
\end{center}

\vspace{-10pt}

This paper is organized as follows. In Section~\ref{sec:proof} we prove Lemma~\ref{lem:caratheodory} that shows the existence of initial measures $\mu_0$ with the properties needed to prove Theorem~\ref{thm:main theorem}. Using it we 
prove our main theorem. Lemma 2 can also be used to derive 
results for graph sampling as we show in Section~\ref{sec:sampling}.

\section{Proof of the Theorem} \label{sec:proof}
\subsection{Constructing Initial Measures.}
Recall that $G$ is a $d$-regular graph with $n$ vertices, the eigenvalues of $AD^{-1}$ lie in $[-1,1]$ and 
are ordered as in \eqref{eq:eigenvalues} 
with a corresponding set of orthonormal eigenvectors $\{\phi_1, \ldots, \phi_n\}$ that form a basis 
of $\mathbb{R}^n$. 
The eigenvector of $\lambda_1$ is often called the {\em trivial} eigenvector and is  
$$\phi_1 = \left(\frac{1}{\sqrt{n}}, \frac{1}{\sqrt{n}},  \dots, \frac{1}{\sqrt{n}}\right) = \frac{\mathbbm{1}}{\sqrt{n}}.$$

\begin{lemma} \label{lem:caratheodory}
For every $2 \leq \ell \leq n-1$ there exists a vector $0 \neq w \in \mathbb{R}_{\geq 0}^n$ such that
\begin{enumerate}
\item $w$ has at most $\ell$ nonzero entries, and 
\item for all $2 \leq j \leq \ell$, we have
$ \left\langle \phi_j, w \right\rangle = 0.$
\end{enumerate}
\end{lemma}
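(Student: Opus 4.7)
The plan is to obtain $w$ as a vertex (extreme point) of a natural polytope of feasible measures, following the Carathéodory/basic-feasible-solution philosophy alluded to by the reference to Tchakaloff's theorem. The starting observation is that the uniform measure $\mathbbm{1}/n$ already satisfies all of the orthogonality constraints: since $\phi_1 = \mathbbm{1}/\sqrt{n}$ and $\{\phi_1,\ldots,\phi_n\}$ is orthonormal, $\mathbbm{1}$ is automatically orthogonal to $\phi_2, \ldots, \phi_\ell$. What the uniform measure lacks is sparsity; the task is to sparsify it while preserving nonnegativity and the orthogonality conditions.

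Concretely, I would define
$$P \;=\; \left\{ w \in \mathbb{R}^n : w \geq 0,\; \sum_{i=1}^n w_i = 1,\; \langle w, \phi_j \rangle = 0 \text{ for } 2 \leq j \leq \ell \right\}.$$
This is a closed bounded polytope (bounded by the simplex constraint) which is nonempty, since it contains $\mathbbm{1}/n$. The $\ell$ equality constraints—one coming from $\sum_i w_i = 1$ (equivalent to $\langle w, \phi_1\rangle = 1/\sqrt{n}$) and $\ell - 1$ orthogonality conditions—are linearly independent because $\phi_1, \phi_2, \ldots, \phi_\ell$ are orthonormal.

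The key structural fact is that any vertex $w^*$ of $P$ has at most $\ell$ nonzero entries. A vertex of $P$ in $\mathbb{R}^n$ is the unique solution of $n$ linearly independent active affine constraints; the $\ell$ equality constraints account for $\ell$ of them, so at least $n - \ell$ of the inequalities $w_i \geq 0$ must be active, forcing $w^*_i = 0$ for at least $n - \ell$ indices $i$. Choosing $w$ to be any vertex of $P$ then yields a nonzero nonnegative vector with at most $\ell$ nonzero entries and $\langle w, \phi_j \rangle = 0$ for $2 \leq j \leq \ell$, as required.

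I do not anticipate a serious obstacle here: the argument is a clean application of the theory of basic feasible solutions. The only thing to verify with care is the linear independence of the equality constraints, which is immediate from orthonormality, and the observation that the uniform measure provides a canonical feasible point so that $P$ is nonempty to begin with.
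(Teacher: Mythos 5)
Your proof is correct, and it takes a noticeably different (and in one respect cleaner) route than the paper. The paper phrases the problem as the feasibility of the system $Mw=e_1$, $w\geq 0$, where $M$ has rows $\phi_1^\top,\dots,\phi_\ell^\top$; it proves feasibility by a Farkas-lemma contradiction (pairing a putative separating certificate with $\phi_1$) and then extracts a sparse solution via Carath\'eodory's theorem for the cone generated by the columns of $M$ in $\mathbb{R}^\ell$. You instead observe up front that the uniform measure $\mathbbm{1}/n$ is already feasible---which renders the entire Farkas step unnecessary, since $\langle \mathbbm{1},\phi_j\rangle=0$ for $j\geq 2$ is immediate from orthonormality---and then obtain sparsity by taking a vertex of the bounded nonempty polytope $P\subset\mathbb{R}^n$ and counting active constraints. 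The two sparsification steps are morally the same combinatorial fact (Carath\'eodory for cones is equivalent to the existence of basic feasible solutions), but you work with the polytope in $\mathbb{R}^n$ while the paper works with the cone in $\mathbb{R}^\ell$. One small remark: the linear independence of your equality constraints, while true, is not even needed for the counting argument---$\ell$ equality constraints can contribute at most $\ell$ to the $n$ linearly independent active constraints defining a vertex no matter what, so at least $n-\ell$ coordinates vanish regardless. Your approach also has the minor advantage of making the paper's subsequent remark (that such a $w$ can be found by solving a linear program over $\{w: Mw=e_1,\ w\geq 0\}$ and taking an optimal vertex) an immediate corollary of the construction rather than an afterthought.
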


\begin{proof}
Fix $\ell$ such that $2 \leq \ell \leq n-1$ and let $M$ be the 
$\ell \times n$ matrix (of rank $\ell$) whose rows are $\phi_1^\top, \ldots, \phi_\ell^\top$. 
Let $e_1$ be the first standard basis vector in $\mathbb{R}^\ell$. 
The statement of the lemma is equivalent to saying that 
the system 
\begin{align} \label{eq:nnwt design}
     Mw =e_1, \,\,\, w \geq 0
\end{align}
has a solution supported on at most $\ell$ coordinates.
Indeed, if there exists $0 \neq w \in \mathbb{R}^n_{\geq 0}$ 
such that $\langle \phi_j, w\rangle = 0$ for $j=2, \ldots, \ell$ then 
$\langle \phi_1, w \rangle > 0$ and we may scale $w$ to obtain a solution of \eqref{eq:nnwt design}. 
Conversely, any solution to \eqref{eq:nnwt design} provides a nonzero $w \in \mathbb{R}^n_{\geq 0}$ orthogonal to 
$\phi_2, \ldots, \phi_\ell$.
Let $K$ denote the polyhderal cone spanned by the columns of $M$, i.e., 
$$K = \{ M x \,:\, x \geq 0\} \subset \mathbb{R}^{\ell}.$$ 
The dimension of $K$ is $\ell$ since $\textup{rank}(M) = \ell$. 
The system \eqref{eq:nnwt design} has a solution 
if and only if $e_1 \in K$.
If $e_1 \in K$ then by Caratheodory's theorem \cite{Caratheodory}, 
$e_1$ lies in a subcone spanned by at most $\dim(K) = \ell$ columns of $M$ and so 
there is a solution $w$ to \eqref{eq:nnwt design} 
with at most $\ell$ non-zero entries and we are done.
If $e_1 \not \in K$, i.e., \eqref{eq:nnwt design} does not have a solution, then 
by the Farkas Lemma \cite{farkas} 
there exists $y \in \mathbb{R}^\ell$ such that 
\begin{align} \label{eq:farkas}
    y^\top M \geq 0, \textup{ and } y^\top e_1 = y_1 < 0.
\end{align}
Since positively scaling $y$ does not affect \eqref{eq:farkas}, we may assume that $y_1 = -1$, 
and so $y^\top M \geq 0$ says that 
\begin{align} 
-\phi_1 + y_2 \phi_2 +  y_3 \phi_3 + \ldots +  y_{\ell} \phi_\ell  \geq 0.
\end{align}
Taking the dot product on both sides with $\phi_1$, and using the fact that the $\{\phi_i\}$ 
are orthonormal, we get $-1 \geq 0$, which is a contradiction.
Therefore, $e_1 \in K$.
\end{proof}

\begin{remark}
Note that there is always a solution to $M w = e_1$ supported on at most $\ell$ coordinates since $M$ has full 
row rank. More precisely, there is a $\ell \times \ell$ submatrix $N$ of $M$ such that up to permutation of coordinates, $w = (N^{-1}e_1, 0)$ is a solution to \eqref{eq:nnwt design} with 
at most $\ell$ nonzero entries. The content of Lemma~\ref{lem:caratheodory} is that there is a 
nonnegative $w$ satisfying the required conditions. 
\end{remark}

\begin{remark} \label{rem:ordering doesn't matter}
The ordering of eigenvalues as in \eqref{eq:eigenvalues} 
was not relevant in the proof of Lemma~\ref{lem:caratheodory}. The statement of 
the lemma holds for any 
ordering of the eigenvectors (and hence eigenvalues) 
as long as $\phi_1 = \frac{\mathbbm{1}}{\sqrt{n}}$ and $\lambda_1 = 1$.
\end{remark}

\begin{remark} \label{rem:lemma for L}
We also observe that Lemma 2 did not rely on any special properties of the vectors $\phi_j$ except
orthogonality. As such, it could also be applied to eigenvectors of the Laplacian of $G$ given by $L = D-A$ 
or the eigenvectors of other appropriate symmetric matrices.
\end{remark}

\begin{remark}
While the inequality  $ \# \left\{i: w_i \neq 0 \right\} \leq \ell$ is needed in general (see \cite{thomas} for examples where this bound is tight), there are examples where $ \# \left\{i: w_i \neq 0 \right\}$ is much smaller than $\ell$ while still satisfying 
$ \left\langle \phi_j, w \right\rangle = 0$ for all $2 \leq j \leq \ell$.
\end{remark}

\subsection{Proof of Theorem~\ref{thm:main theorem}.}

\begin{proof} Let $w$ be a vector satisfying the conditions of Lemma~\ref{lem:caratheodory}. 
We can assume without loss of generality that $\|w\|_{1} = 1$ which means that
$\mu_0 := w$ is a probability measure supported on at most $\ell$ vertices of $G$. Since 
$\{ \phi_1, \ldots, \phi_n\}$ is an orthonormal basis of $\mathbb{R}^n$,  
$\langle w, \phi_j\rangle =0$ for $2 \leq j \leq \ell$, $\|w\|_{1} = 1$ and 
$\phi_1 = \frac{\mathbbm{1}}{\sqrt{n}}$, we have
\begin{align*}
w = \sum_{j=1}^{n} \left\langle w, \phi_j \right\rangle \phi_j  = \langle w, \phi_1\rangle \phi_1 + 
\sum_{j=\ell+1}^{n} \left\langle w, \phi_j \right\rangle \phi_j 
= \frac{\mathbbm{1}}{n} +  \sum_{j=\ell+1}^{n} \left\langle w, \phi_j \right\rangle \phi_j.
\end{align*}
The symmetric matrix $AD^{-1}$ has the diagonalization $A D^{-1} = U \Lambda U^\top$ where 
$U$ has the orthonormal columns $\phi_1, \ldots, \phi_n$ and $\Lambda = \textup{diag}(\lambda_1, 
\ldots, \lambda_n)$. Therefore, 
$$ \mu_k = (A D^{-1})^k w =  U\Lambda^k U^\top w = \frac{\mathbbm{1}}{n} +  \sum_{j=\ell + 1}^{n} \lambda_j^k \langle w, \phi_j \rangle \phi_j.$$
Using the orthonormality of the eigenvectors and the ordering \eqref{eq:eigenvalues} we get that 
\begin{align*}
 \sum_{v \in V} \left( \mu_k(v) - \frac{1}{n} \right)^2 &= \left\| \mu_k - \frac{\mathbbm{1}}{n} \right\|_{2}^2 
  =\left\|    \sum_{j=\ell + 1}^{n} \lambda_j^k \left\langle w, \phi_j \right\rangle \phi_j \right\|_{2}^2 \\
  & =  \sum_{j=\ell + 1}^{n} {\lambda_j}^{2k} \left\langle w, \phi_j \right\rangle^2 
   \leq  {\lambda_{\ell+1}}^{2k} \sum_{j=\ell+1}^{n}  \left\langle w, \phi_j \right\rangle^2 \\
   &\leq  {\lambda_{\ell+1}}^{2k} \sum_{j=1}^{n}  \left\langle w, \phi_j \right\rangle^2
 =  {\lambda_{\ell+1}}^{2k} \left\| w \right\|_{2}^2 \leq {\lambda_{\ell+1}}^{2k}.
 \end{align*}
The last inequality follows from the fact that $w$ is a probability measure, and hence,
$$\| w\|_{2} =\left( \sum_{v \in V} w(v)^2 \right)^{1/2} \leq \left( \sum_{v \in V} w(v) \right)^{1/2} = \|w\|_1 = 1.$$
\end{proof}

\begin{remark}
We note that the bound $\left\| w \right\|_{2}^2 \leq 1$, can be suboptimal. In particular, if $w$ is evenly distributed over $\ell$ vertices, then this would lead to an improvement by a factor of up to $\left\| w \right\|_{2}^2 = \ell^{-1}$. 
\end{remark}

\begin{remark} \label{rem:more general conclusion}
If we had not used the fact that 
 $\langle w, \phi_j \rangle = 0$ for $2 \leq j \leq \ell$ up front in the proof, then we would have 
gotten the identity
$$  \sum_{v \in V} \left( \mu_k(v) - \frac{1}{n} \right)^2 = \left \| \sum_{j=2}^{n} {\lambda_j}^{2k} \left\langle w, \phi_j \right\rangle^2 \right \|^2_2.$$
Hence, the asymptotic behavior of the left-hand side as $k \rightarrow \infty$ is 
controlled by $\lambda_a^{2k}$, where
$ a = \min \left\{ j > 1: \left\langle \mu_0, \phi_j\right\rangle \neq 0 \right\}.$ 
 The orthogonality requirements on $w$ with respect to the eigenbasis 
$\{\phi_i\}$ show that measures $w$ that give faster convergence rates than 
${\lambda_2}^{2k}$ are rare in the sense that such measures form a lower-dimensional set in the probability simplex.
\end{remark}

\begin{remark}
If $\ell = 1$, then we can pick $\mu_0$ to be the Dirac measure on a vertex of $G$ and the statement of Theorem~\ref{thm:main theorem} is just the well-known expression \eqref{eq:convergence to uniform}. 
If $\ell = 2$, then we require the initial probability measure 
$\mu_0$ to be orthogonal to $\phi_2$. Since $\langle \phi_1, \phi_2 \rangle = 0$ and $\phi_1 = \frac{\mathbbm{1}}{\sqrt{n}}$ is positive, it must be that $\phi_2$ has two non-zero coordinates of opposite signs. Suppose the $i$th coordinate of $\phi_2$ is 
$\alpha > 0$ and the $j$th coordinate is $\beta < 0$.  Then we may pick $w \in \mathbb{R}^{n}_{\geq 0}$ as $w_i = -\beta, w_j = \alpha$, $w_k = 0$ for all $k \neq i,j$ and 
then normalize to obtain an initial measure $\mu_0$ supported on two vertices. By 
Theorem~\ref{thm:main theorem}, a random walk initialized with $\mu_0$ will converge to the uniform distribution at rate controlled by $|\lambda_3|$. 
A direct argument as above does not seem possible for higher values of $\ell$ since the sign patterns of the eigenvectors start to interact in somewhat nontrivial ways. However, an initial measure can always be obtained by solving the linear program 
$$\textup{minimize} \{ \langle c, w \rangle \,:\, Mw = e_1, w \geq 0 \}$$ 
coming from the constraints in \eqref{eq:nnwt design}, for a suitable vector $c$. The optimum will occur at a vertex of the polytope $\{w \in \mathbb{R}^n\,:\, Mw = e_1, w \geq 0\}$ whose 
support will have size at most $\ell = \textup{rank}(M)$. 
\end{remark}

\section{Application to Graph Sampling} 
\label{sec:sampling}
Graph Sampling refers to the problem of trying to estimate, for a given function
$f:V \rightarrow \mathbb{R}$, its global average on $G$ by sampling $f$ 
in a few vertices and using a weighted average of the function values on these vertices. 
Mathematically, we want a subset $U \subset V$ and weights $a_u \in \mathbb{R}, u \in U$ such that 
\begin{align} \label{eq:sample}
    \frac{1}{|V|} \sum_{v \in V} f(v) \sim \sum_{u \in U} a_u f(u). 
\end{align} 
The main problems are (i) how should one choose $U$, (ii) 
how should one choose the weights $a_u$,  and (iii) what sort of approximation
guarantees can one obtain? 
 Lemma~\ref{lem:caratheodory} implies that good sample points with 
non-negative weights 
always exist.  

\begin{proposition} \label{prop:graph sampling}
Let $G=(V,E)$ be a connected, regular graph on $n$ vertices and let $1 \leq \ell \leq n-1$. There exists a subset $U \subset V$ with $\# U \leq \ell$ and nonnegative weights $a_u \geq 0$ such that for all functions $f:V \rightarrow \mathbb{R}$
$$ \left| \frac{1}{|V|} \sum_{v \in V} f(v) -  \sum_{u \in U} a_u f(u) \right| \leq  \left(\sum_{i = \ell+1}^n \left\langle \phi_i, f \right\rangle^2\right)^{1/2}.$$
\end{proposition}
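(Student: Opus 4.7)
The plan is to build the sampling set $U$ and weights $(a_u)$ directly from the vector $w$ produced by Lemma~\ref{lem:caratheodory}, and then read off the error in the eigenbasis $\{\phi_1, \ldots, \phi_n\}$. Concretely, given $\ell$, I apply Lemma~\ref{lem:caratheodory} (with a Dirac measure handling the case $\ell=1$) and rescale to obtain $w \in \mathbb{R}^n_{\geq 0}$ with $\|w\|_1 = 1$, with at most $\ell$ nonzero coordinates, and with $\langle w, \phi_j\rangle = 0$ for $2 \leq j \leq \ell$. I then set $U = \mathrm{supp}(w)$ and $a_u = w_u$ for $u \in U$, so that $\sum_{u \in U} a_u f(u) = \langle w, f\rangle$ when we view $f$ as a vector in $\mathbb{R}^n$.

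The next step is the eigenbasis expansion: writing $f = \sum_{i=1}^n \langle f, \phi_i\rangle \phi_i$ and using the orthogonality properties of $w$, we obtain
\begin{align*}
\sum_{u \in U} a_u f(u) \;=\; \langle w, f\rangle \;=\; \langle w, \phi_1\rangle \langle f, \phi_1\rangle \;+\; \sum_{j=\ell+1}^n \langle w, \phi_j\rangle \langle f, \phi_j\rangle.
\end{align*}
Because $\phi_1 = \mathbbm{1}/\sqrt{n}$ and $\|w\|_1 = 1$, the first term is exactly $\langle f, \phi_1\rangle/\sqrt{n} = \frac{1}{n}\sum_{v \in V} f(v)$, so it cancels the global average on the left-hand side of the desired inequality.

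All that remains is to estimate the tail $\sum_{j=\ell+1}^n \langle w, \phi_j\rangle \langle f, \phi_j\rangle$ by Cauchy--Schwarz, which gives
\begin{align*}
\left| \frac{1}{|V|}\sum_{v \in V} f(v) - \sum_{u \in U} a_u f(u) \right| \;\leq\; \left(\sum_{j=\ell+1}^n \langle f, \phi_j\rangle^2 \right)^{1/2} \left(\sum_{j=\ell+1}^n \langle w, \phi_j\rangle^2\right)^{1/2}.
\end{align*}
The second factor is controlled by $\|w\|_2$, and here I use the same observation made at the end of the proof of Theorem~\ref{thm:main theorem}: for a nonnegative vector with $\|w\|_1 = 1$, each coordinate lies in $[0,1]$, so $w_v^2 \leq w_v$ and thus $\|w\|_2 \leq 1$. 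This yields the claimed bound.

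There is no real obstacle here; the proof is essentially a bookkeeping exercise once Lemma~\ref{lem:caratheodory} is in hand. The only place that requires a little care is the normalization $\|w\|_1 = 1$, which simultaneously makes $w$ a probability measure, forces $\langle w, \phi_1\rangle \langle f, \phi_1\rangle$ to match $\frac{1}{n}\sum_v f(v)$, and delivers the bound $\|w\|_2 \leq 1$ that keeps the Cauchy--Schwarz constant equal to one.
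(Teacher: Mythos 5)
Your proof is correct and follows essentially the same route as the paper: take the vector $w$ from Lemma~\ref{lem:caratheodory}, normalize it to a probability measure, let $U$ be its support with weights $a_u = w(u)$, expand $f$ in the eigenbasis so the $\phi_1$-component reproduces the global average, and bound the tail by Cauchy--Schwarz together with $\|w\|_2 \leq \|w\|_1 = 1$. Your explicit remark about the $\ell=1$ case (Dirac measure) and the normalization is a welcome bit of care, but the argument is the same.
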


\begin{proof}
Let $w$ denote the initial measure guaranteed by Lemma~\ref{lem:caratheodory},  
$U$ denote its support and $a_u = w(u)$ for all $u \in U$. 
Set the mean value of $f$ to be 
\begin{align} \label{eq:mean value}
\bar{f} : = \frac{\mathbbm{1}^\top f}{n} = \frac{1}{|V|} \sum_{v \in V} f(v).
\end{align}
Then, using the fact that $\phi_1 = \frac{\mathbbm{1}}{\sqrt{n}}$ 
we see that 
\begin{align} \label{eq:f}
f = \sum_{i=1}^{n} \left\langle \phi_i, f \right\rangle \phi_i = 
\bar{f} \mathbbm{1} + \sum_{i=2}^n \left\langle \phi_i, f \right\rangle \phi_i.
\end{align}
From \eqref{eq:f} and the facts that $\mathbbm{1}^\top w = 1$ 
and $\left\langle \phi_j, w \right\rangle = 0 \quad \forall~2 \leq j \leq \ell$, we get  that 
\begin{align}\label{eq:weighted sum}
 \sum_{u \in U} a_u f(u) = \left\langle f, w \right\rangle 
 =  \left\langle  \overline{f} \mathbbm{1} + 
 \sum_{i=2}^{n} \left\langle \phi_i, f \right\rangle \phi_i, w \right\rangle
 =  \overline{f} + \sum_{i=\ell+1}^{n}  \left\langle \phi_i, f \right\rangle \left\langle \phi_i, w \right\rangle.
 \end{align}
 Combining \eqref{eq:mean value} and \eqref{eq:weighted sum}
we conclude that 
 \begin{align}
     \frac{1}{|V|} \sum_{v \in V} f(v) -  \sum_{u \in U} a_u f(u) = 
     - \sum_{i=\ell+1}^{n} \left\langle  \phi_i, f \right\rangle \left\langle \phi_i, w \right\rangle.
 \end{align}
Then, using the Cauchy-Schwarz inequality we get the desired result: 
\begin{align*}
    \left| \frac{1}{|V|} \sum_{v \in V} f(v) - \sum_{u \in U} a_u f(u) \right| &= 
    \left| \sum_{i=\ell+1}^{n} \left\langle  \phi_i, f \right\rangle \left\langle \phi_i, w \right\rangle  \right| \\
    &\leq  \left(\sum_{i = \ell+1}^n \left\langle \phi_i, f \right\rangle^2\right)^{1/2}  \left(\sum_{i = \ell+1}^n \left\langle \phi_i, w \right\rangle^2\right)^{1/2} \\
    &\leq  \left(\sum_{i = \ell+1}^n \left\langle \phi_i, f \right\rangle^2\right)^{1/2}  \|w\|_{2}
     \leq \left(\sum_{i = \ell+1}^n \left\langle \phi_i, f \right\rangle^2\right)^{1/2}.
\end{align*}
\end{proof}

\begin{remark}
The expression on the right-hand side of the inequality in Proposition~\ref{prop:graph sampling} is the square of the length of the projection of $f$ on the subspace spanned by 
$\phi_{\ell + 1}, \ldots, \phi_n$. This length is a measure of the smoothness of $f$ 
in the sense of whether or not $f$ is mainly comprised of low-frequency eigenvectors, namely those $\phi_i$ with $i$ small. If yes, then the approximation provided by sampling at $U$ is very good. If $f$ is highly oscillatory, then the right-hand
side will not be much smaller than $\|f\|_{2}$, and in that 
case, one cannot hope for an accurate reconstruction of the average $\bar{f}$ by sampling at $U$. In fact, if we obtain a large sampling error despite using an 
intial measure provided by Theorem~\ref{thm:main theorem}, then it provides evidence that 
$f$ is highly oscillatory and has a large amount of its $\ell^2-$energy on high-frequency eigenvectors of $AD^{-1}$. 
\end{remark}

\begin{remark} Proposition~\ref{prop:graph sampling} allows one to custom-tailor 
$U$ in the presence of prior knowledge of the function $f$. For example, if 
one knew a priori that $f$ was mainly comprised of $\ell-1$ mid-frequency eigenvectors, then we could change the ordering of eigenvectors to make these appear after $\phi_1$. Then Lemma~\ref{lem:caratheodory} would provide an initial measure supported on at most $\ell$ vertices that is orthogonal to all the chosen mid-frequency eigenvectors 
and hence a $U$ for which the right-hand side in Proposition~\ref{prop:graph sampling} is small. 
\end{remark}

\begin{remark} As already noted in Remark~\ref{rem:lemma for L},  
 Lemma~\ref{lem:caratheodory} holds for the Graph Laplacian 
 $L = D-A$ whose top eigenvalue is $1$ with $\phi_1 = \frac{\mathbbm{1}}{\sqrt{n}}$. Further, $L$ has an orthonormal eigenbasis. This implies that 
 Proposition~\ref{prop:graph sampling}, with the same proof, 
 also holds for non-regular graphs provided we replace the eigenvectors
 $\phi_i$ by eigenvectors of $D-A$.
 \end{remark}

\begin{remark}
Proposition~\ref{prop:graph sampling} should be compared to the results
of Linderman \& Steinerberger \cite{george} who connected integration error with the speed of equidistribution of
the induced random walk. In comparison, the argument for Proposition~\ref{prop:graph sampling} is simple and self-contained. We also refer to \cite{pes1, pes2, t} for  related results. 
 \end{remark}

\end{document}